\documentclass[11pt]{article}

\usepackage{amssymb,amsmath,euscript,bbm,xcolor,graphicx,epstopdf}
\RequirePackage[numbers]{natbib}

\usepackage{amsmath,amssymb,amsthm,bm,euscript,bbm}
\usepackage{verbatim}
\usepackage{graphicx}
\usepackage{epstopdf}
\usepackage{color}
\usepackage{pstricks,pst-node,pst-plot}
\usepackage{rotating}
\usepackage{enumitem}
\setlength{\oddsidemargin}{.1in} \setlength{\evensidemargin}{.1in}
\setlength{\textwidth}{6.2in} \setlength{\textheight}{9in}
\setlength{\topmargin}{-0.5in} \setlength{\footskip}{1cm}

\newtheorem{proposition}{Proposition}[section]
\newtheorem{lemma}[proposition]{Lemma}
\newtheorem{theorem}[proposition]{Theorem}

\def\la{\lambda}

\def\ep{\varepsilon}

\def\l{{\langle}}
\def\r{\rangle}

\def\R{{\mathbb R}}
\def\S{{\mathbb S}}

\def\E{{\mathbb E}}
\def\P{{\mathbb P}}

\newcommand{\GOI}{{\rm GOI}}

%
%
%
%
\makeatletter \@addtoreset{equation}{section} \makeatother
%
%

%
%
%
%
%
%
%
%
%
%
%
%
%
%
\newenvironment{example}{%
	\vspace{0.3cm} \pagebreak [2]%
	\par%
	\refstepcounter{proposition}%
	\noindent%
	{\bf  Example~\theproposition\ }}{}%
%
%
%
%
%
%
%
%
%
%
%
%
%

\begin{document}
	
	\title {Smooth Mat\'ern Gaussian Random Fields: Euler Characteristic, Expected Number and Height Distribution of Critical Points\thanks{Research partially supported by NSF DMS-2220523 and Simons Foundation Collaboration Grant 854127. } }
	\author{Dan Cheng\\ Arizona State University}
	
	\date{}
	
	\maketitle
	
	\begin{abstract}
		This paper studies Gaussian random fields with Mat\'ern covariance functions with smooth parameter $\nu>2$. Two cases of parameter spaces, the Euclidean space and $N$-dimensional sphere, are considered. For such smooth Gaussian fields, we have derived the explicit formulae for the expected Euler characteristic of the excursion set, the expected number and height distribution of critical points. The results are valuable for approximating the excursion probability in family-wise error control and for computing p-values in peak inference.
	\end{abstract}
	
	\noindent{\small{\bf Keywords}: Gaussian random fields; Mat\'ern; Smooth; Euler characteristic; Height distribution; Critical points.}
	
	\noindent{\small{\bf Mathematics Subject Classification}:\ 60G15, 60G60, 62G32, 15B52.}
	
	\section{Introduction}
	
	In recent years the Mat\'ern class of covariance functions has gained widespread
	popularity in spatial statistics \cite{Guttorp:2006,Stein:1999}, mainly due to its great flexibility for modelling the spatial covariance and hence dependent spatial data and processes. Specifically, the \textit{Mat\'ern covariance function} is defined as  
	\begin{equation}\label{eq:Matern}
		\mathcal{M}(d) = \frac{\sigma^2}{2^{\nu-1}\Gamma(\nu)}\left( \frac{\sqrt{2\nu}d}{\ell} \right)^\nu K_\nu\left( \frac{\sqrt{2\nu}d}{\ell} \right), \quad d\ge 0,
	\end{equation}
	where $K_\nu(\cdot)$ is the modified Bessel function of the second kind. This covariance function has three positive parameters $\sigma^2$, $\ell$ and $\nu $, with $\sigma^2$ controlling the variance, $1/\ell$ controlling the spatial range, and $\nu$ controlling the smoothness. Let $X=\{X(t), t\in T\}$ be a centered Gaussian random field living on an $N$-dimensional parameter set $T$. We call $X$ a \textit{Mat\'ern Gaussian random field} if the covariance satisfies
	\[
	\E[X(t)X(s)] = \mathcal{M}(\|t-s\|), \quad t,s\in T.
	\]
	That is, $X$ is isotropic with covariance given by \eqref{eq:Matern}. There is rich literature on Mat\'ern Gaussian fields; however, only a few are focused on the smooth case. This paper aims to bridging smooth Mat\'ern Gaussian fields and recent developments on the expected Euler characteristic of the excursion set \cite{AT07,ChengXiao16}, the expected number and height distribution of critical points \cite{AstrophysJ85,CS15,CS17,CS18,ChangePoint}.
	
	Let $\{X(t), t\in T\}$ be a centered smooth Gaussian random field. Let $A_u(X, T)=\{t\in T: X(t) \ge u\}$ be the excursion set of $X$ exceeding level $u$ over the parameter set $T$ and denote by $\chi(A_u(X, T))$ its Euler characteristic. It is shown in \cite{AT07} that, the expected Euler characteristic $\E[\chi(A_u(X, T))]$ is computable and can be used to approximate the excursion probability $\P[\sup_{t\in T} X(t) \ge u]$ for large $u$ such that the error is super-exponentially small. This is useful in controlling the family-wise error in statistics \cite{Taylor:2007}.
	
	The number of critical points of index $i$ of $X$ above $u$ on a unit-area disc is defined as
	\begin{equation}\label{Eq:mu-i}
		\mu_i(X, u) = \# \left\{ t\in D: X(t)\geq u, \nabla X(t)=0, \text{index} (\nabla^2 X(t))=i \right\}, \quad i=0, \ldots, N,
	\end{equation}
	where $D$ is an $N$-dimensional unit-area disc on $T$, $\nabla X(t)$ and $\nabla^2 X(t)$ are respectively the gradient and Hessian of $X$, and $\text{index} (\nabla^2 X(t))$ denotes the number of negative eigenvalues of $\nabla^2 X(t)$. Notice that we will focus on the expectation of $\mu_i(X, u)$, which depends on the volume of the area of interest due to isotropy, thus we omit $D$ in the notation for simplicity. The expected number of critical points $\E[\mu_i(X, u)]$ of smooth random fields is important in statistics \cite{CS18} and physics \cite{AstrophysJ85}. By default, let $\mu_i(X) = \mu_i(X, -\infty)$ be the number of critical points of index $i$ of $X$ over a unit disc.
	
	The height distribution of a critical value of index $i$ of $X$ at $t_0$ is defined as
	\begin{equation}\label{Eq:F}
		F_i(u) :=\lim_{\ep\to 0} \P\left\{X(t_0)>u \ | \ \exists \text{ a critical point of index $i$ of } X(t) \text{ in } B(t_0, \ep) \right\},
	\end{equation}
	where $B(t_0, \ep)$ is the ball of radius $\ep$ centered at $t_0$. It has been found to be an important tool for computing p-values in peak detection and thresholding problems in statistics \cite{CS17,CS19sphere,ChangePoint} and neuroimaging \cite{Chumbley:2009}. Notice that, due to isotropy, $F_i(u)$ does not depend on the location $t_0$. It is shown in \cite{CS15} that,
	\begin{equation}\label{Eq:F-ratio}
		F_i(u) = \frac{\E[\mu_i(X, u)]}{\E[\mu_i(X)]}.
	\end{equation}
	Therefore, $F_i(u)$ can be obtained immediately once the evaluation of $\E[\mu_i(X, u)]$ is known.

	\section{Smooth Mat\'ern Gaussian Random Fields on Euclidean Space}
	In studying the expected Euler characteristic and critical points, it requires the Gaussian fields to be twice differentiable \cite{AT07,CS15}; thus, throughout this paper, we assume $\nu>2$. To characterize the variances of the derivatives of $X$, following the notation in \cite{CS15}, we introduce a real function $\rho(d^2) = \mathcal{M}(d)$, or equivalently $\rho(d) = \mathcal{M}(\sqrt{d})$, and have the following results.
	
	\begin{proposition}\label{prop:1}
		Let $\rho(d) = \mathcal{M}(\sqrt{d})$, and let $\rho'=\rho'(0)$ and $\rho''=\rho''(0)$. Suppose $\nu>2$. Then
		\begin{equation}\label{eq:rho'}
			\begin{split}
				\rho' = -\frac{\sigma^2\nu}{2(\nu-1)\ell^2} \quad  \text{\rm and } \quad 
				\rho'' = \frac{\sigma^2\nu^2}{4(\nu-1)(\nu-2)\ell^4}.
			\end{split}
		\end{equation} 
		In particular,
		\begin{equation}\label{eq:kappa}
			\begin{split}
				\kappa := -\frac{\rho'}{\sqrt{\rho''}}\bigg\rvert_{\sigma=1} = \sqrt{\frac{\nu-2}{\nu-1}}\quad  \text{\rm and } \quad  \eta := \frac{\sqrt{-\rho'}}{\sqrt{\rho''}} = \sqrt{\frac{2(\nu-2)}{\nu}}\ell.
			\end{split}
		\end{equation} 
	\end{proposition}
	\begin{proof} It follows from the property of Bessel functions (see page 502 in \cite{Spanier:1987}) that, for a non-integer $\nu$,
		\begin{equation}\label{eq:K}
			\begin{split}
				K_\nu(r) = \frac{2^{\nu-1}\Gamma(\nu)}{r^\nu}\sum_{j=0}^\infty \frac{(r^2/4)^j}{j!(1-\nu)_j} + \frac{r^\nu\Gamma(-\nu)}{2^{\nu+1}}\sum_{j=0}^\infty \frac{(r^2/4)^j}{j!(1+\nu)_j},
			\end{split}
		\end{equation}  
	where $(x)_j=x(x+1)\cdots(x+j-1)$ with $(x)_0=1$ is the Pochhammer symbol. For $\nu>2$, organizing the right side of \eqref{eq:K}, we obtain the following expansion as $r\to 0$,
	\begin{equation}\label{eq:Taylor1}
		\begin{split}
			\frac{r^\nu}{2^{\nu-1}\Gamma(\nu)}K_\nu(r) = 1 - \frac{r^2}{4(\nu-1)} + \frac{r^4}{32(\nu-1)(\nu-2)} + o(r^4).
		\end{split}
	\end{equation}  
	 On the other hand, by page 502 in \cite{Spanier:1987}, if $\nu=n$ is an integer, then
	\begin{equation*}
		\begin{split}
			K_n(r) = &\frac{2^{n-1}}{r^n}\sum_{j=0}^{n-1} \frac{(-1)^j(n-j-1)!(r^2/4)^j}{j!} \\
			&+ \frac{(-1)^nr^n}{2^n}\sum_{j=0}^\infty \left[ \frac{\psi(1+j)}{2} + \frac{\psi(1+n+j)}{2} -\log\left(\frac{r}{2}\right) \right] \frac{(r^2/4)^j}{j!(n+j)!},
		\end{split}
	\end{equation*}  
where $\psi(\cdot)$ is the digamma function defined by $\psi(z)=\frac{d}{dz}\log \Gamma(z) = \Gamma'(z)/\Gamma(z)$. This implies that, for $n\ge 3$, we have the following expansion as $r\to 0$,
\begin{equation}\label{eq:Taylor2}
	\begin{split}
		\frac{r^n}{2^{n-1}\Gamma(n)}K_n(r) = 1 - \frac{r^2}{4(n-1)} + \frac{r^4}{32(n-1)(n-2)} + o(r^4).
	\end{split}
\end{equation}  
Combining \eqref{eq:Taylor1} and \eqref{eq:Taylor2}, we obtain that, for $\nu>2$,
\begin{equation}\label{eq:Taylor-M}
	\begin{split}
		\mathcal{M}(d) &= \frac{\sigma^2}{2^{\nu-1}\Gamma(\nu)}\left( \frac{\sqrt{2\nu}d}{\ell} \right)^\nu K_\nu\left( \frac{\sqrt{2\nu}d}{\ell} \right)\\
		&=\sigma^2\left[1-\frac{\nu}{2(\nu-1)}\left(\frac{d}{\ell}\right)^2 + \frac{\nu^2}{8(\nu-1)(\nu-2)}\left(\frac{d}{\ell}\right)^4\right] + o(d^4), \quad d\to 0.
	\end{split}
\end{equation}  
In other words, we have
\begin{equation*}
	\begin{split}
		\rho(d) = \mathcal{M}(\sqrt{d}) = \sigma^2\left[1-\frac{\nu}{2(\nu-1)\ell^2}d + \frac{\nu^2}{8(\nu-1)(\nu-2)\ell^4}d^2\right] + o(d^2),  \quad d\to 0.
	\end{split}
\end{equation*}  
This second-order Taylor expansion for $\rho(d)$ around $d=0$ implies \eqref{eq:rho'} and hence \eqref{eq:kappa}.
			\end{proof}
		
		It is well known (see for example \cite{Stein:1999}) that Gaussian fields with covariance function given by \eqref{eq:Matern} are mean-square $m$ times  differentiable if $\nu>m$. Thus, under our assumption $\nu>2$, the Mat\'ern Gaussian fields are mean-square twice differentiable, whose partial derivatives are denoted by
		\[
		X_i(t) = \frac{\partial X(t)}{\partial t_i}, \quad X_{jk}(t) = \frac{\partial^2 X(t)}{\partial t_j\partial t_k}, \quad i,j,k=1, \ldots, N.
		\]
		The following result shows that these derivatives exist almost surely.
		\begin{lemma}\label{lemma:smooth}
			Let $X$ be a Mat\'ern Gaussian field on $\R^N$ with the smoothness parameter $\nu>2$. Then $X$ is twice differentiable almost surely and there exist $c>0$ and $\gamma>0$ such that for any compact set $I$,
			\begin{equation}\label{eq:Lip}
				\E(X_{ij}(t)-X_{ij}(s))^2\le c\|t-s\|^\gamma, \quad \forall t,s\in I.
			\end{equation}
		\end{lemma}
	\begin{proof}
		Denote the covariance function of $X$ by $r(t,s)=\E[X(t)X(s)] = \mathcal{M}(\|t-s\|)$. It then follows from the proof of Proposition \ref{prop:1} that, there exists $\gamma>0$ such that as $\|t-s\|\to 0$,
		\[
		r(t,s)= \sigma^2\left[1-\frac{\nu}{2(\nu-1)}\left(\frac{\|t-s\|}{\ell}\right)^2 + \frac{\nu^2}{8(\nu-1)(\nu-2)}\left(\frac{\|t-s\|}{\ell}\right)^4\right] + o(\|t-s\|^{4+\gamma}).
		\]
		Thus, by writing the covariance of $(X_{ij}(t), X_{ij}(s))$ as partial derivatives of $r(t,s)$ (cf. (5.5.4) in \cite{AT07}) and applying the isotropic property of $X$, we obtain that there exists $c>0$ such that
		\[
		\E(X_{ij}(t)-X_{ij}(s))^2=2\bigg|\frac{\partial^4 r(t,s)}{\partial t_i\partial t_j\partial s_i\partial s_j}\Big|_{t=s} - \frac{\partial^4 r(t,s)}{\partial t_i\partial t_j\partial s_i\partial s_j}\bigg|\le c\|t-s\|^\gamma, \quad \forall t,s\in I.
		\]
		By Theorem 1.4.2 in \cite{AT07}, we obtain that $X$ is twice differentiable almost surely.
	\end{proof}
Note that \eqref{eq:Lip} also implies that $X$ is almost surely a Morse function (cf. Corollary 11.3.2 in \cite{AT07}) and hence we can apply the Kac-Rice formula to compute the expected Euler characteristic and expected number of critical points. 
	
		\subsection{The expected Euler characteristic of excursion set}
		Denote by $\phi(x)=(2\pi)^{-1/2}e^{-x^2/2}$ and $\Psi(x)=\int_x^\infty \phi(y)dy$ the density and tail probability of the standard normal distribution, respectively. Let $H_j(x)$ be the Hermite
		polynomial of order $j$, i.e.,
		$$H_j(x) = (-1)^j e ^{x^2/2} \frac{d^j}{dx^j}\big( e^{-x^2/2} \big), \quad j\ge 0.$$		
		\begin{theorem}\label{thm:EEC}
			Let $\{X(t), t\in T\}$ be a centered Gaussian random field with Mat\'ern covariance function \eqref{eq:Matern}, where $T\subset \R^N$ is an $N$-dimensional piecewise smooth set. Suppose $\nu>2$. Then the expected Euler characteristic is given by
			\begin{equation}\label{eq:EEC}
				\begin{split}
					\E[\chi(A_u(X, T))] = \sum_{j=0}^N  \frac{\nu^{j/2}}{(\nu-1)^{j/2}\ell^j} \mathcal{L}_j (T) \xi_j\left(\frac{u}{\sigma}\right),
				\end{split}
			\end{equation} 
			where $\mathcal{L}_j (T)$ are the Lipschitz-Killing curvatures (cf. (10.7.3) in \cite{AT07}) of $T$ and
			\begin{equation}\label{eq:rhok}
				\begin{split}
					\xi_0(x)= \Psi(x), \quad \xi_j(x) = (2\pi)^{-j/2} H_{j-1} (x) \phi(x), \quad  j\geq 1.
				\end{split}
			\end{equation} 
		\end{theorem}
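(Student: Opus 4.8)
The plan is to derive \eqref{eq:EEC} from the Gaussian Kinematic Formula (GKF) of Adler and Taylor, reducing the entire field-dependent content to the single scalar $\rho'(0)$ supplied by the Proposition. Since $X(t)\ge u$ if and only if $\tilde X(t):=X(t)/\sigma\ge u/\sigma$, we have $A_u(X,T)=A_{u/\sigma}(\tilde X,T)$, so it suffices to treat the centered, unit-variance, isotropic field $\tilde X$ and evaluate the resulting densities at $u/\sigma$. Because $\nu>2$ forces $\tilde X$ to be almost surely $C^2$ with a nondegenerate gradient distribution, it satisfies the regularity hypotheses under which the GKF applies on the piecewise smooth set $T$.

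First I would recall the GKF in the form stated in \cite{AT07}: for a centered unit-variance Gaussian field,
\begin{equation*}
	\E[\chi(A_u(\tilde X, T))] = \sum_{j=0}^N \mathcal{L}_j^{\tilde X}(T)\, \xi_j(u),
\end{equation*}
where the $\mathcal{L}_j^{\tilde X}(T)$ are the Lipschitz--Killing curvatures of $T$ computed in the Riemannian metric induced by $\tilde X$, and the functions in \eqref{eq:rhok} are exactly the Gaussian Euler characteristic densities: one checks $\xi_j(x)=(2\pi)^{-(j+1)/2}H_{j-1}(x)e^{-x^2/2}$, matching the standard normalization. Thus the only field-dependent ingredient is the induced metric, and the problem reduces to identifying this metric and tracking how the $\mathcal{L}_j$ transform under it.

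Next I would compute the induced metric. For an isotropic field whose covariance is a function of $\|t-s\|^2$, the metric is $g_{ij}(t)=\E[\partial_i\tilde X(t)\,\partial_j\tilde X(t)]=\lambda_2\,\delta_{ij}$, a constant multiple of the Euclidean metric, where $\lambda_2$ is the common second spectral moment. Writing the covariance as $\rho(\|t-s\|^2)$ with $\rho$ as in the Proposition, a direct differentiation gives $\lambda_2=-2\rho'(0)\big|_{\sigma=1}$; substituting $\rho'(0)|_{\sigma=1}=-\nu/(2(\nu-1)\ell^2)$ from \eqref{eq:rho'} yields $\lambda_2=\nu/((\nu-1)\ell^2)$. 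Hence the induced metric is the Euclidean metric scaled homothetically by the factor $c=\sqrt{\lambda_2}=\sqrt{\nu}/(\sqrt{\nu-1}\,\ell)$.

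Finally I would invoke the scaling behaviour of Lipschitz--Killing curvatures: under a homothety multiplying all lengths by $c$, the $j$-th curvature scales by $c^j$, so $\mathcal{L}_j^{\tilde X}(T)=c^j\,\mathcal{L}_j(T)=\nu^{j/2}/((\nu-1)^{j/2}\ell^j)\,\mathcal{L}_j(T)$, with $\mathcal{L}_j(T)$ the Euclidean curvatures. Plugging this into the GKF and replacing $u$ by $u/\sigma$ gives exactly \eqref{eq:EEC}. The main obstacle is not any single hard estimate but the careful bookkeeping of the metric normalization: one must verify that the induced metric is genuinely a constant homothetic rescaling of the Euclidean metric---which relies on isotropy together with the covariance being a function of $\|t-s\|^2$---and then apply the degree-$j$ scaling of $\mathcal{L}_j$ correctly, after which the value of $\rho'(0)$ from the Proposition does the remaining work.
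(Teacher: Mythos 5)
Your proposal is correct and follows essentially the same route as the paper: standardize to $X/\sigma$, apply the Gaussian kinematic formula (Theorem 12.4.1 in \cite{AT07}), identify the induced metric as the constant conformal rescaling $\frac{\nu}{(\nu-1)\ell^2}\langle \cdot,\cdot\rangle$ of the Euclidean metric via $-2\rho'(0)$, and use the degree-$j$ scaling $\mathcal{L}_j^{X/\sigma}(T) = \left(\frac{\nu}{(\nu-1)\ell^2}\right)^{j/2}\mathcal{L}_j(T)$. The only cosmetic difference is that you compute ${\rm Var}[\nabla (X/\sigma)]$ by direct differentiation of $\rho(\|t-s\|^2)$ while the paper cites formula (5.5.5) in \cite{AT07}, which is the same fact.
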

	\begin{proof} It follows from formula (5.5.5) in \cite{AT07} or Lemma 3.2 in \cite{CS18} that ${\rm Var}[\nabla X(t)]=-2\rho'(0)I_N$, where $I_N$ is the $N\times N$ identity matrix. By \eqref{eq:rho'}, we obtain
		\begin{equation}\label{eq:metric}
		{\rm Var}[\nabla X(t)/\sigma]=\left(\frac{\nu}{(\nu-1)\ell^2}\right)I_N.
	    \end{equation} 
		Applying Theorem 12.4.1 in \cite{AT07} to the standardized Gaussian field $X/\sigma$ yields
		\begin{equation}\label{eq:EEC-Euc}
			\begin{split}
				\E[\chi(A_u(X, T))] = \E[\chi(A_{u/\sigma}(X/\sigma, T))] = \sum_{j=0}^N  \mathcal{L}_j^{X/\sigma} (T) \xi_j\left(\frac{u}{\sigma}\right),
			\end{split}
		\end{equation} 	
	where $\mathcal{L}_j^{X/\sigma} (T)$ are the Lipschitz-Killing curvatures of $T$ calculated with respect to the metric (cf. (12.2.2) in \cite{AT07}) induced by $X/\sigma$. Due to \eqref{eq:metric}, following the arguments on page 324 in \cite{AT07}, we see that the metric induced by $X/\sigma$ makes a new inner product for $t, s \in \R^N$ given by $\frac{\nu}{(\nu-1)\ell^2} \l t, s\r$, where $\l \cdot, \cdot \r$ is the simple Euclidean inner product, implying that 
	\[
	\mathcal{L}_j^{X/\sigma} (T) = \left(\frac{\nu}{(\nu-1)\ell^2}\right)^{j/2}  \mathcal{L}_j (T), \quad j\ge 0.
	\]
	plugging this into \eqref{eq:EEC-Euc} yields the desired result \eqref{eq:EEC}.
		\end{proof}
	
	The formula \eqref{eq:EEC} shows that the expected Euler characteristic is computable with a relatively simple form, mainly due to the isotropy of Mat\'ern Gaussian fields. The Lipschitz-Killing curvatures $\mathcal{L}_j (T)$ depend on the geometry of $T$. We show below an example for the case when $T$ is a cube. 
	\begin{example} Let $T=[0,b]^N$ be an $N$-dimensional cube in $\R^N$. Then, by (10.7.4) in \cite{AT07}, $\mathcal{L}_j (T) = {N\choose j} b^j$, which implies
		\begin{equation*}
			\begin{split}
				\E[\chi(A_u(X, T))] = \phi\left(\frac{u}{\sigma}\right)\sum_{j=1}^N \frac{{N\choose j} b^j \nu^{j/2}}{(2\pi)^{j/2}(\nu-1)^{j/2}\ell^j}H_{j-1}\left(\frac{u}{\sigma}\right) + \Psi\left(\frac{u}{\sigma}\right).
			\end{split}
		\end{equation*} 	
	\end{example}
	
	\subsection{Expected number and height distribution of critical points}
	It is introduced in \cite{CS18} that, an $N\times N$ random matrix $M=(M_{ij})_{1\le i,j\le N}$ is called \emph{Gaussian Orthogonally Invariant} (GOI) with \emph{covariance parameter} $c$, denoted by $\GOI (c)$, if it is symmetric and all entries are centered Gaussian variables such that
	\begin{equation}\label{eq:OI}
		\E[M_{ij}M_{kl}] = \frac{1}{2}(\delta_{ik}\delta_{jl} + \delta_{il}\delta_{jk}) + c\delta_{ij}\delta_{kl},
	\end{equation}
	where $\delta_{ij}$ is the Kronecker delta function. We see that $\GOI(c)$ becomes a GOE matrix if $c=0$. In particular, Lemma 2.2 in \cite{CS18} shows that the density of the ordered eigenvalues $\la_1\le \ldots \le \la_N$ of $\GOI (c)$ is given by
		\begin{equation}\label{Eq:GOI density}
			\begin{split}
				f_c(\la_1, \ldots, \la_N) &=\frac{1}{K_N \sqrt{1+Nc}} \exp\left\{ -\frac{1}{2}\sum_{i=1}^N \la_i^2 + \frac{c}{2(1+Nc)} \left(\sum_{i=1}^N \la_i\right)^2\right\}\\
				&\quad \times  \prod_{1\le i<j \le N} |\la_i - \la_j| \mathbbm{1}_{\{\la_1\leq\ldots\leq\la_N\}},
			\end{split}
		\end{equation}
		where $K_N=2^{N/2}\prod_{i=1}^N\Gamma\left(\frac{i}{2}\right)$ and $c>-1/N$.
	We use the notation $\E_{\GOI(c)}^N$ to represent the expectation under the $\GOI(c)$ density \eqref{Eq:GOI density}, i.e., for a measurable function $g$,
	\begin{equation}\label{Eq:E-GOE}
		\E_{\GOI(c)}^N [g(\la_1, \ldots, \la_N)] = \int_{\R^N} g(\la_1, \ldots, \la_N) f_c(\la_1, \ldots, \la_N) d\la_1\cdots d\la_N.
	\end{equation}
	\begin{theorem} Let $\{X(t), t\in T\}$ be a centered Gaussian random field with Mat\'ern covariance function \eqref{eq:Matern}, where $T\subset \R^N$ is an $N$-dimensional set. Suppose $\nu>2$. Then for $i=0, \ldots, N$,
		\begin{equation}\label{Eq:critial-Euclidean}
			\begin{split}
				\E[\mu_i(X)] &=  \frac{2^{N/2}}{\pi^{N/2}\eta^N} \E_{\GOI(1/2)}^N \left[\prod_{j=1}^N|\la_j|\mathbbm{1}_{\{\la_i<0<\la_{i+1}\}}\right],\\
					\E[\mu_i(X, u)]  &=\frac{2^{N/2}}{\pi^{N/2}\eta^N} \int_{u/\sigma}^\infty \phi(x) \E_{\GOI((1-\kappa^2)/2)}^N \left[\prod_{j=1}^N\big|\la_j-\kappa x/\sqrt{2}\big|\mathbbm{1}_{\{\la_i<\kappa x/\sqrt{2}<\la_{i+1}\}}\right] dx,\\
			F_i(u)  &=\frac{\int_{u/\sigma}^\infty \phi(x) \E_{\GOI((1-\kappa^2)/2)}^N \left[\prod_{j=1}^N\big|\la_j-\kappa x/\sqrt{2}\big|\mathbbm{1}_{\{\la_i<\kappa x/\sqrt{2}<\la_{i+1}\}}\right] dx}{\E_{\GOI(1/2)}^N \left[\prod_{j=1}^N|\la_j|\mathbbm{1}_{\{\la_i<0<\la_{i+1}\}}\right] },
		\end{split}
	\end{equation}
		where $\kappa$ and $\eta$ are given in \eqref{eq:kappa}, $\E_{\GOI(c)}^N$ is defined in \eqref{Eq:GOI density} and $\la_0$ and $\la_{N+1}$ are regarded respectively as $-\infty$ and $\infty$ for consistency. 
	\end{theorem}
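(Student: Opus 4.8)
The plan is to derive all three formulae from the Kac--Rice metatheorem for the expected number of critical points, specializing the general isotropic-field computation of \cite{CS15} to the Mat\'ern covariance via the spectral constants $\kappa$ and $\eta$ recorded in \eqref{eq:kappa}. Since $\nu>2$ guarantees that $X$ is almost surely $C^2$ and that the relevant covariance matrices are nondegenerate, the Kac--Rice formula applies and gives, after invoking isotropy and that $D$ has unit volume,
\[
\E[\mu_i(X,u)] = p_{\nabla X(t)}(0)\,\E\!\left[\,|\det \nabla^2 X(t)|\,\mathbbm{1}_{\{X(t)\ge u\}}\mathbbm{1}_{\{\mathrm{index}(\nabla^2 X(t))=i\}}\ \big|\ \nabla X(t)=0\right],
\]
with $p_{\nabla X(t)}$ the density of the (isotropic) gradient.

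First I would record the second-order spectral moments. Writing $C(t,s)=\rho(\|t-s\|^2)$ and differentiating, one gets $\mathrm{Var}[\nabla X]=-2\rho'(0)I_N$, $\E[X\,\partial^2_{ij}X]=2\rho'(0)\delta_{ij}$, and
\[
\mathrm{Cov}(\partial^2_{ij}X,\partial^2_{kl}X)=4\rho''(0)\big(\delta_{ij}\delta_{kl}+\delta_{ik}\delta_{jl}+\delta_{il}\delta_{jk}\big).
\]
Because odd-order derivatives of $C$ vanish on the diagonal, $\nabla X(t)$ is independent of the pair $(X(t),\nabla^2 X(t))$, so the conditioning on $\nabla X=0$ may simply be dropped. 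The Hessian covariance above is precisely the $\GOI$ structure \eqref{eq:OI}: unconditionally $\nabla^2 X \stackrel{d}{=}\sqrt{8\rho''(0)}\,M$ with $M\sim\GOI(1/2)$, which already yields the first formula with prefactor $p_{\nabla X}(0)(8\rho''(0))^{N/2}$ and integrand $\prod_j|\la_j|\mathbbm{1}_{\{\la_i<0<\la_{i+1}\}}$.

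Next I would condition on the field value. For $X=\sigma x$ the Hessian acquires conditional mean $\frac{2\rho'(0)}{\sigma}x\,I_N$ and a conditional covariance of $\GOI$ type with parameter $\frac12-\frac{\rho'(0)^2}{2\sigma^2\rho''(0)}$; using \eqref{eq:rho'} the ratio $\rho'(0)^2/(\sigma^2\rho''(0))$ equals $\kappa^2$, so the parameter is $(1-\kappa^2)/2$, and after factoring out $\sqrt{8\rho''(0)}$ the mean shift becomes $-\kappa x/\sqrt2$ times the identity. Hence $|\det\nabla^2X|$ and the index indicator turn into $\prod_j|\la_j-\kappa x/\sqrt2|$ and $\mathbbm{1}_{\{\la_i<\kappa x/\sqrt2<\la_{i+1}\}}$ under $\GOI((1-\kappa^2)/2)$. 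Integrating $p_X$ over $\{X\ge u\}$ (substituting $x=v/\sigma$) gives the stated integral, and the common prefactor collapses via $-4\rho''(0)/\rho'(0)=4/\eta^2$ to $p_{\nabla X}(0)(8\rho''(0))^{N/2}=2^{N/2}/(\pi^{N/2}\eta^N)$. Finally $F_i(u)$ is immediate from \eqref{Eq:F-ratio}.

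The main obstacle is bookkeeping rather than conceptual: verifying that the conditional Hessian is genuinely $\GOI((1-\kappa^2)/2)$ up to the scalar $\sqrt{8\rho''(0)}$, and that all the normalizing constants — the Gaussian gradient density, the determinant scaling, and the $\GOI$ normalization — combine into the clean factor $2^{N/2}/(\pi^{N/2}\eta^N)$. This hinges entirely on the exact identities $\kappa^2=(\nu-2)/(\nu-1)$ and $\eta^2=-\rho'(0)/\rho''(0)=2(\nu-2)\ell^2/\nu$ from \eqref{eq:kappa}; once these are in hand the general isotropic formula of \cite{CS15} applies verbatim, the only care being that the $\sigma$-dependence cancels so that $\kappa$ and $\eta$ control everything.
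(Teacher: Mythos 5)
Your proposal is correct, but it takes a genuinely different (more self-contained) route than the paper. The paper's proof is essentially a two-line citation: since $\mu_i(X)=\mu_i(X/\sigma)$ and $\mu_i(X,u)=\mu_i(X/\sigma,u/\sigma)$, it applies the general result for smooth isotropic fields (Theorem 3.5 of \cite{CS18}) to $X$ and to the standardized field $X/\sigma$, with $\kappa$ and $\eta$ supplied by \eqref{eq:kappa}, and then invokes \eqref{Eq:F-ratio}. What you have done instead is re-derive the content of that cited theorem from the Kac--Rice formula: the moment identities ${\rm Var}[\nabla X]=-2\rho'(0)I_N$, $\E[X\,\partial^2_{ij}X]=2\rho'(0)\delta_{ij}$, $\E[\partial^2_{ij}X\,\partial^2_{kl}X]=4\rho''(0)(\delta_{ij}\delta_{kl}+\delta_{ik}\delta_{jl}+\delta_{il}\delta_{jk})$; the independence of $\nabla X$ from $(X,\nabla^2X)$ at a fixed point; the identification $\nabla^2X\stackrel{d}{=}\sqrt{8\rho''(0)}\,M$ with $M\sim\GOI(1/2)$, and, conditionally on $X=\sigma x$, the mean shift $-\kappa x/\sqrt{2}\,I_N$ with fluctuation $\GOI((1-\kappa^2)/2)$; and the collapse of the normalizing constants to $2^{N/2}/(\pi^{N/2}\eta^N)$ via $\eta^2=-\rho'(0)/\rho''(0)$ and $\kappa^2=\rho'(0)^2/(\sigma^2\rho''(0))$. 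All of these computations check out, including the $\sigma$-cancellation. What the paper's approach buys is brevity and reduced risk of constant-tracking errors; what yours buys is a verification of the cited machinery and an explicit exhibition of where the GOI structure and the parameters $\kappa,\eta$ enter. One cosmetic correction: the full index-$i$ GOI formula you are reconstructing is Theorem 3.5 of \cite{CS18}, not \cite{CS15} (the latter treats local maxima and the height distribution via \eqref{Eq:F-ratio}); since you derive the formula rather than quote it, this misattribution does not affect the validity of your argument.
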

	\begin{proof} By the definition \eqref{Eq:mu-i}, we have  $\mu_i(X)=\mu_i(X/\sigma)$ and $\mu_i(X, u)=\mu_i(X/\sigma, u/\sigma)$. Applying Theorem 3.5 in \cite{CS18} to the standardized field $X/\sigma$, we obtain the first and second lines in \eqref{Eq:critial-Euclidean}. Finally, the last line in \eqref{Eq:critial-Euclidean} follows directly from \eqref{Eq:F-ratio}.
	\end{proof}

	\section{Smooth Mat\'ern Gaussian Random Fields on Spheres}\label{sec:2D}
	The applications in geoscience, astronomy and environmental sciences have stimulated recent rapid development in statistics of random fields on spheres. It has been shown in \cite{Gneiting13} that, many of the commonly used covariance functions on Euclidean spaces are valid on spheres when Euclidean distance is replaced by the spherical distance (great circle distance) on a sphere. However, the Mat\'ern class in \eqref{eq:Matern} is positive definite with the spherical distance only if $\nu \le 1/2$.
	
	Let $\S^N$ be an $N$-dimensional unit sphere. Let $\|\cdot\|$ and $\l \cdot, \cdot \r$ be the Euclidean distance and inner product in $\R^{N+1}$, respectively. As shown in \cite{Y83}, one can apply the identity
	\[
	\|x-y\| = 2 \sin \left(\frac{\theta(x, y)} 2\right), \quad \forall x,\, y \in \mathbb S^N \subset \R^{N+1},
	\]
	where $\theta(x,y)= \arccos \l x, y \r \in [0, \pi]$ denotes the spherical distance on $\mathbb{S}^N$, to construct covariance functions on spheres. In particular, by \eqref{eq:Matern}, we define the Mat\'ern covariance function on $\S^N$ as  
	\begin{equation}\label{eq:Matern-sphere}
		\mathcal{M}\left(2\sin(\theta(x,y)/2)\right) = \frac{\sigma^2}{2^{\nu-1}\Gamma(\nu)}\left( \frac{2\sqrt{2\nu}\sin(\theta(x,y)/2)}{\ell} \right)^\nu K_\nu\left( \frac{2\sqrt{2\nu}\sin(\theta(x,y)/2)}{\ell} \right).
	\end{equation} 

    Let $\{X(x), x\in \S^N\}$ be a centered Gaussian random field with Mat\'ern covariance function \eqref{eq:Matern-sphere}. By the construction of the covariance function \eqref{eq:Matern-sphere}, it can be seen that, this Gaussian field on $\S^N$ is simply a Mat\'ern Gaussian field on $\R^{N+1}$ with covariance \eqref{eq:Matern} restricted on $\S^N$. Therefore, one still has the property that the Gaussian field $X$ is twice differentiable almost surely if $\nu>2$. In particular, the smoothness property in Lemma \ref{lemma:smooth} holds for $X_{\varphi_\alpha}=X\circ \varphi_\alpha^{-1}$ on $\varphi_\alpha(U_\alpha)\subset \R^N$, where $\mathcal{A}=(U_\alpha, \varphi_\alpha)_{\alpha\in \mathcal{I}}$ is a finite atlas for $\S^N$. By Corollary 11.3.2 in \cite{AT07}, $X$ is almost surely a Morse function on $\S^N$ and hence we can apply the Kac-Rice formula to compute the expected Euler characteristic and expected number of critical points. 
    
    Note that
	\[
	\sin(\theta(x,y)/2) = \sqrt{\frac{1-\cos\theta(x,y)}{2}} = \sqrt{\frac{1-\l x, y\r}{2}}.
	\]
	Following the notation in \cite{CS17}, we write the covariance of $X$ as
	\begin{equation}\label{eq:Matern-sphere2}
		\begin{split}
		C(\l x, y\r)&:= \E[X(x)X(y)] = \mathcal{M}\left(\sqrt{2(1-\l x, y\r)}\right) \\
		&=  \frac{\sigma^2}{2^{\nu-1}\Gamma(\nu)}\left( \frac{\sqrt{2\nu}\sqrt{2(1-\l x, y\r)}}{\ell} \right)^\nu K_\nu\left( \frac{\sqrt{2\nu}\sqrt{2(1-\l x, y\r)}}{\ell} \right), \ \forall x,\, y \in \mathbb S^N,
		\end{split}
	\end{equation}
and derive the following results.
\begin{proposition}
	Let $\{X(x), x\in \S^N\}$ be a centered Gaussian random field with Mat\'ern covariance function \eqref{eq:Matern-sphere2}. Suppose $\nu>2$. Let $C(p):=C(\l x, y\r)$, $p\in [-1,1]$; and let $C'=C'(1)$ and $C''=C''(1)$. Then
	\begin{equation}\label{eq:C'}
		\begin{split}
			C'= \frac{\sigma^2\nu}{(\nu-1)\ell^2} \quad  \text{\rm and } \quad 
			C''= \frac{\sigma^2\nu^2}{(\nu-1)(\nu-2)\ell^4}.
		\end{split}
	\end{equation} 
In particular,
\begin{equation}\label{eq:eta}
	\begin{split}
		\tilde{\kappa} := \frac{C'}{\sqrt{C''}}\bigg\rvert_{\sigma=1} = \sqrt{\frac{\nu-2}{\nu-1}} \quad  \text{\rm and } \quad \tilde{\eta} :=\frac{\sqrt{C'}}{\sqrt{C''}} = \frac{\sqrt{\nu-2}}{\sqrt{\nu}}\ell.
	\end{split}
\end{equation} 
\end{proposition}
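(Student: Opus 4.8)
The plan is to reduce everything to the second-order Taylor expansion of $\mathcal{M}$ already established in \eqref{eq:Taylor-M}. The crucial observation is that, by \eqref{eq:Matern-sphere2}, the function $C(p)$ is nothing but $\mathcal{M}$ evaluated at $d=\sqrt{2(1-p)}$, so that $d^2=2(1-p)$ and $d^4=4(1-p)^2$. Substituting these into \eqref{eq:Taylor-M}, and using that the expansion of $\mathcal{M}(d)$ runs only over even powers of $d$ up to the $d^4$ term, I obtain an expansion of $C$ free of fractional powers, in integer powers of $(1-p)$:
\begin{equation*}
  C(p) = \sigma^2\left[1 - \frac{\nu}{(\nu-1)\ell^2}(1-p) + \frac{\nu^2}{2(\nu-1)(\nu-2)\ell^4}(1-p)^2\right] + o\big((1-p)^2\big), \quad p\to 1.
\end{equation*}
Because $(1-p)=-(p-1)$ while $(1-p)^2=(p-1)^2$, this is precisely the order-two Taylor expansion of $C$ about $p=1$.

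Next I would read off the derivatives by matching against $C(p)=C(1)+C'(1)(p-1)+\tfrac12 C''(1)(p-1)^2+o((p-1)^2)$. The coefficient of $(p-1)$ gives $C'(1)=\sigma^2\nu/((\nu-1)\ell^2)$ (the sign flipping out of the $-(1-p)$ term), and twice the coefficient of $(p-1)^2$ gives $C''(1)=\sigma^2\nu^2/((\nu-1)(\nu-2)\ell^4)$, which is \eqref{eq:C'}. As a cross-check, since $C(p)$ is $\mathcal{M}$ viewed as a function of the squared distance and evaluated at $2(1-p)$ — the same function whose derivatives at $0$ are $\rho'(0)$ and $\rho''(0)$ in the first Proposition — the chain rule applied to the inner map $p\mapsto 2(1-p)$ yields directly $C'(1)=-2\rho'(0)$ and $C''(1)=4\rho''(0)$, and substituting \eqref{eq:rho'} reproduces the same values.

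Finally, the formulas for $\tilde\kappa$ and $\tilde\eta$ in \eqref{eq:eta} follow by inserting \eqref{eq:C'} into the definitions and simplifying. Setting $\sigma=1$, one has $\tilde\kappa = C'(1)/\sqrt{C''(1)} = [\nu/((\nu-1)\ell^2)]/[\nu/(\ell^2\sqrt{(\nu-1)(\nu-2)})]=\sqrt{(\nu-2)/(\nu-1)}$; keeping $\sigma$ general, $\tilde\eta=\sqrt{C'(1)}/\sqrt{C''(1)}$ simplifies to $\sqrt{\nu-2}\,\ell/\sqrt{\nu}$. Note that the $\sigma=1$ restriction is needed for $\tilde\kappa$ (a leftover factor of $\sigma$ survives), whereas in $\tilde\eta$ the factors of $\sigma$ cancel, so no such restriction is required there.

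I do not expect a genuine obstacle, as the result is essentially a change of variables on top of \eqref{eq:Taylor-M}. The one point deserving a line of justification is the legitimacy of differentiating $C$ twice at the endpoint $p=1$, which corresponds to $\theta(x,y)=0$ and hence to the argument of $K_\nu$ tending to $0$ — the singular regime of the Bessel function. This is exactly what \eqref{eq:Taylor-M} resolves: because $\mathcal{M}(d)$ admits an expansion in $d^2$ near $0$, the composition $\mathcal{M}(\sqrt{2(1-p)})$ is a $C^2$ function of $p$ at $p=1$, so the coefficient matching above is valid and the stated derivatives exist.
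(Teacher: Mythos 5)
Your proposal is correct and follows essentially the same route as the paper: substitute $d^2=2(1-p)$ into the expansion \eqref{eq:Taylor-M}, obtain the second-order expansion of $C(p)$ in powers of $(p-1)$, and read off $C'(1)$ and $C''(1)$ by coefficient matching, from which \eqref{eq:eta} follows by direct simplification. Your additional chain-rule cross-check via $C'(1)=-2\rho'(0)$, $C''(1)=4\rho''(0)$ and your remark on endpoint differentiability go slightly beyond the paper's proof but do not change the argument.
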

\begin{proof}
	By \eqref{eq:Taylor-M}, we obtain that, as $p\to 1$,
	\begin{equation}\label{eq:Taylor-MS}
		\begin{split}
			C(p)&=\mathcal{M}\left(\sqrt{2(1-p)}\right) \\
			&=\sigma^2\left[1-\frac{\nu}{(\nu-1)\ell^2}(1-p) + \frac{\nu^2}{2(\nu-1)(\nu-2)\ell^4}(1-p)^2\right] + o\left((1-p)^2\right)\\
			&=\sigma^2\left[1+\frac{\nu}{(\nu-1)\ell^2}(p-1) + \frac{\nu^2}{2(\nu-1)(\nu-2)\ell^4}(p-1)^2\right] + o\left((p-1)^2\right).
		\end{split}
	\end{equation}  
This second-order Taylor expansion for $C(p)$ around $p=1$ implies \eqref{eq:C'} and hence \eqref{eq:eta}.
\end{proof}

Let $\omega_j =   \frac{2\pi^{(j+1)/2}}{\Gamma((j+1)/{2})}$ be the spherical area of the $j$-dimensional unit sphere $\mathbb{S}^j$. We have the following results on the expected Euler characteristic of the excursion set $A_u(X, \S^N)=\{x\in \S^N: X(x) \ge u\}$, as well as the expected number and height distribution of critical points.
	\begin{theorem}
	Let $\{X(x), x\in \S^N\}$ be a centered Gaussian random field with Mat\'ern covariance function \eqref{eq:Matern-sphere2}. Suppose $\nu>2$. Then the expected Euler characteristic is 
	\begin{equation}\label{eq:EECS}
		\begin{split}
			\E[\chi(A_u(X, \S^N))] = \sum_{j=0}^N  \frac{\nu^{j/2}}{(\nu-1)^{j/2}\ell^j}  \mathcal{L}_j (\mathbb{S}^N) \xi_j\left(\frac{u}{\sigma}\right),
		\end{split}
	\end{equation} 
where $\xi_j(\cdot)$ are given in \eqref{eq:rhok} and
\begin{equation}\label{Eq:L-K curvature}
	\begin{split}
		\mathcal{L}_j (\mathbb{S}^N) = \left\{
		\begin{array}{l l}
			2 \binom{N}{j}\frac{\omega_N}{\omega_{N-j}} & \quad \text{if $N-j$ is even,}\\
			0 & \quad \text{otherwise},
		\end{array} \right. \quad j=0, 1, \ldots, N,
	\end{split}
\end{equation}
are the Lipschitz-Killing curvatures of $\mathbb{S}^N$.
\end{theorem}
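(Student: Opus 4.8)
The plan is to follow the same strategy as in the proof of Theorem~\ref{thm:EEC}, passing to the standardized field and invoking the Gaussian Kinematic Formula, but now carried out on the Riemannian manifold $\S^N$ rather than on a flat subset of $\R^N$. First I would standardize, writing $\E[\chi(A_u(X, \S^N))] = \E[\chi(A_{u/\sigma}(X/\sigma, \S^N))]$, so that the standardized field has unit variance and standard-normal marginals; this guarantees that the Euler-characteristic densities appearing in the formula are exactly the $\xi_j$ of \eqref{eq:rhok}, as in the Euclidean case. The remaining task is then to supply the correct Lipschitz-Killing curvatures of $\S^N$ computed in the metric that $X/\sigma$ induces.

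The crucial step is to identify that induced metric. By the general theory (cf. (12.2.2) in \cite{AT07}), it is $g^{X/\sigma}_{x}(v,w) = \E[(D_v(X/\sigma))(x)\,(D_w(X/\sigma))(x)]$ for tangent vectors $v,w \in T_x\S^N$. I would evaluate it using isotropy: because the covariance of $X$ depends only on $\l x, y\r$ and the field is invariant under the rotation group acting on $\R^{N+1}$, the induced metric must be a constant multiple of the round metric $g$ on $\S^N$ inherited from $\R^{N+1}$. To pin down the constant, I would parameterize a great circle through $x$ by arc length $s$ (which coincides with the geodesic distance on the unit sphere) and set $r(s) = C(\l x, \gamma(s)\r) = C(\cos s)$; differentiating twice gives $r''(0) = -C'(1)$, so the variance of a unit-speed directional derivative of $X$ equals $C'(1)$. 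Hence $g^{X/\sigma} = (C'(1)/\sigma^2)\,g = \frac{\nu}{(\nu-1)\ell^2}\,g$, where the last equality uses \eqref{eq:C'}. This matches the Euclidean scalar in \eqref{eq:metric}, as expected, since the two covariances agree to second order near the diagonal by \eqref{eq:Taylor-MS}.

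With the induced metric identified as a constant rescaling $\lambda g$ with $\lambda = \frac{\nu}{(\nu-1)\ell^2}$, I would invoke the homogeneity of the Lipschitz-Killing curvatures: under $g \mapsto \lambda g$, $j$-dimensional volumes scale by $\lambda^{j/2}$, whence $\mathcal{L}_j \mapsto \lambda^{j/2}\mathcal{L}_j$. Therefore $\mathcal{L}_j^{X/\sigma}(\S^N) = \lambda^{j/2}\mathcal{L}_j(\S^N) = \frac{\nu^{j/2}}{(\nu-1)^{j/2}\ell^j}\mathcal{L}_j(\S^N)$, where $\mathcal{L}_j(\S^N)$ are the curvatures with respect to the round metric, given by \eqref{Eq:L-K curvature} (a standard computation, cf. \cite{AT07}). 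Applying Theorem 12.4.1 in \cite{AT07} to $X/\sigma$ on the manifold $\S^N$ and substituting these values yields \eqref{eq:EECS}.

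The main obstacle is the middle step: rigorously justifying that the induced metric is a scalar multiple of the round metric and pinning down the scalar as $C'(1)/\sigma^2$. The proportionality follows from the $O(N+1)$-invariance of the covariance, while the value of the scalar rests on the second-order expansion of $C(\cos s)$ along geodesics and the identity $r''(0) = -C'(1)$. Everything else is a direct transcription of the Euclidean argument: the standardization, the scaling law for $\mathcal{L}_j$, and the appeal to the Gaussian Kinematic Formula, whose manifold version in \cite{AT07} applies verbatim to the round sphere.
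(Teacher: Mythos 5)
Your proposal is correct, but it takes a genuinely different route from the paper. The paper's proof is a one-line citation: it applies Lemma 3.5 in \cite{ChengXiao16} to the standardized field $X/\sigma$, and that lemma already delivers the formula $\sum_{j=0}^N (C'(1)/\sigma^2)^{j/2}\,\mathcal{L}_j(\S^N)\,\xi_j(u/\sigma)$ for isotropic fields on spheres, so the only remaining work is substituting $C'(1)$ from \eqref{eq:C'}. You instead re-derive the content of that lemma from first principles: you compute the metric induced by $X/\sigma$ on $\S^N$ (using $O(N+1)$-invariance for proportionality to the round metric, and the great-circle computation $r(s)=C(\cos s)$, $-r''(0)=C'(1)$ to pin down the scalar $C'(1)/\sigma^2$), invoke the scaling law $\mathcal{L}_j \mapsto \lambda^{j/2}\mathcal{L}_j$ under $g\mapsto\lambda g$, and then apply the general manifold form of the expected-EC formula (Theorem 12.4.1 in \cite{AT07}). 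All of these steps are sound --- in particular the identity $-r''(0)=C'(1)$ is exactly right, since $r''(h)=C''(\cos h)\sin^2 h - C'(\cos h)\cos h$ vanishes to the correct order at $h=0$, and the $\lambda^{j/2}$ homogeneity of Lipschitz--Killing curvatures holds for curved manifolds, not only flat sets. What your approach buys is self-containedness and a clean parallel with the paper's own Euclidean proof of Theorem~\ref{thm:EEC} (your scalar matches \eqref{eq:metric}, as you note); what the paper's approach buys is brevity, plus the fact that the regularity/Morse conditions needed for the expected-EC formula on $\S^N$ are absorbed into the cited sphere-specific lemma rather than needing to be checked against the hypotheses of Theorem 12.4.1 in \cite{AT07}, a verification your sketch gestures at ("applies verbatim") but does not carry out.
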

\begin{proof} Applying Lemma 3.5 in \cite{ChengXiao16} to the standardized Gaussian field $X/\sigma$ yields
	\begin{equation*}
		\begin{split}
			\E[\chi(A_u(X, \S^N))] &= \E[\chi(A_{u/\sigma}(X/\sigma, \S^N))] = \sum_{j=0}^N  \left(\frac{C'}{\sigma^2}\right)^{j/2} \mathcal{L}_j (\mathbb{S}^N) \xi_j\left(\frac{u}{\sigma}\right)\\
			&= \sum_{j=0}^N  \left(\frac{\nu}{(\nu-1)\ell^2}\right)^{j/2} \mathcal{L}_j (\mathbb{S}^N) \xi_j\left(\frac{u}{\sigma}\right),
		\end{split}
	\end{equation*} 	
where $C'$ is given in \eqref{eq:C'}.
\end{proof}

Note that, Lemma 3.5 in \cite{ChengXiao16} is a result for smooth Gaussian fields on spheres with covariance functions having certain Taylor expansion. The essential requirement is that the Gaussian field is twice differentiable. Here, $X$ is a smooth ($\nu>2$) Mat\'ern Gaussian field on $\R^{N+1}$ restricted on $\mathbb{S}^N$, following the arguments subsequent to \eqref{eq:Matern-sphere}, we see that the smoothness requirement can be guaranteed by Corollary 11.3.2 in \cite{AT07} and hence Lemma 3.5 in \cite{ChengXiao16} is applicable.

\begin{theorem}
	Let $\{X(x), x\in \S^N\}$ be a centered Gaussian random field with Mat\'ern covariance function \eqref{eq:Matern-sphere2}. Suppose $\nu>2$. Then for $i=0, \ldots, N$,
	\begin{equation}\label{Eq:critial-sphere}
		\begin{split}
			\E[\mu_i(X)] &=  \frac{1}{\pi^{N/2}\tilde{\eta}^N} \E_{\GOI((1+\tilde{\eta}^2)/2)}^N \left[\prod_{j=1}^N|\la_j|\mathbbm{1}_{\{\la_i<0<\la_{i+1}\}}\right],\\
			\E[\mu_i(X, u)]  &=\frac{1}{\pi^{N/2}\tilde{\eta}^N} \int_{u/\sigma}^\infty \phi(x) \E_{\GOI((1+\tilde{\eta}^2-\tilde{\kappa}^2)/2)}^N \left[\prod_{j=1}^N\big|\la_j-\tilde{\kappa} x/\sqrt{2}\big|\mathbbm{1}_{\{\la_i<\tilde{\kappa} x/\sqrt{2}<\la_{i+1}\}}\right] dx,\\
			F_i(u)  &=\frac{\int_{u/\sigma}^\infty \phi(x) \E_{\GOI((1+\tilde{\eta}^2-\tilde{\kappa}^2)/2)}^N \left[\prod_{j=1}^N\big|\la_j-\tilde{\kappa} x/\sqrt{2}\big|\mathbbm{1}_{\{\la_i<\tilde{\kappa} x/\sqrt{2}<\la_{i+1}\}}\right] dx}{\E_{\GOI((1+\tilde{\eta}^2)/2)}^N \left[\prod_{j=1}^N|\la_j|\mathbbm{1}_{\{\la_i<0<\la_{i+1}\}}\right] },
		\end{split}
	\end{equation}
	where $\tilde{\kappa}$ and $\tilde{\eta}$ are given in \eqref{eq:eta},  $\E_{\GOI(c)}^N$ is defined in \eqref{Eq:GOI density} and $\la_0$ and $\la_{N+1}$ are regarded respectively as $-\infty$ and $\infty$ for consistency. 
\end{theorem}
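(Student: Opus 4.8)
The plan is to mirror the proof of the Euclidean critical-point theorem: reduce to the standardized field, invoke the Kac--Rice-type formula for the expected number of critical points of a prescribed index for an isotropic Gaussian field on $\S^N$, and then read off $F_i(u)$ from the ratio identity \eqref{Eq:F-ratio}. First I would record the scaling invariance. Since the defining conditions in \eqref{Eq:mu-i} --- the vanishing of the gradient, the prescribed index of the Hessian, and the superlevel constraint --- are all preserved under multiplication of the field by the positive constant $\sigma$, we have $\mu_i(X)=\mu_i(X/\sigma)$ and $\mu_i(X,u)=\mu_i(X/\sigma,u/\sigma)$. Hence it suffices to compute the corresponding expectations for the unit-variance field $X/\sigma$, whose covariance is $C(\langle x,y\rangle)/\sigma^2$.

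Next I would apply the sphere analogue of Theorem 3.5 in \cite{CS18} --- the expected-critical-point formula for isotropic Gaussian fields on $\S^N$ (cf.\ \cite{CS17,CS19sphere}) --- to the standardized field. This result expresses $\E[\mu_i(X/\sigma)]$ and $\E[\mu_i(X/\sigma,x)]$ as $\GOI$ expectations whose covariance parameter and normalizing constant are determined entirely by the second-order spectral quantities of the field along the diagonal, namely $C'(1)/\sigma^2$ and $C''(1)/\sigma^2$, together with the constant curvature of $\S^N$ (here the unit radius). Substituting the values from \eqref{eq:C'} and rewriting them through the parameters $\tilde{\kappa}$ and $\tilde{\eta}$ of \eqref{eq:eta} yields exactly \eqref{Eq:critial-sphere}: the unconditioned count carries $\GOI$ covariance parameter $(1+\tilde{\eta}^2)/2$, while conditioning on the height level $x$ shifts the mean of the Hessian, producing the factor $\tilde{\kappa} x/\sqrt{2}$ in both the product and the indicator and reducing the covariance parameter to $(1+\tilde{\eta}^2-\tilde{\kappa}^2)/2$. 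Finally, the formula for $F_i(u)$ follows immediately by dividing the second line by the first, as dictated by \eqref{Eq:F-ratio}.

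The step I expect to require the most care is the identification of the $\GOI$ covariance parameters, and in particular the appearance of the extra summand $\tilde{\eta}^2/2$ relative to the Euclidean counterparts, where the parameters were $1/2$ and $(1-\kappa^2)/2$. This term is a genuine curvature effect: on $\S^N$ the Riemannian Hessian differs from the ambient second derivative by a connection term, so that the joint law of $(X,\nabla X,\nabla^2 X)$ at a point carries the cross-covariance $\E[X(x)\nabla^2 X(x)]=-C'(1)I_N$ and an additional $C'(1)$-contribution to the diagonal of $\mathrm{Var}[\nabla^2 X(x)]$, absent in the Euclidean case. After normalizing the Hessian by the $C''(1)$-scale, this contributes precisely $C'(1)/(2C''(1))=\tilde{\eta}^2/2$ to the $\GOI$ covariance parameter $c$ in \eqref{eq:OI}. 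Verifying that the curvature enters with this exact coefficient --- which is already encoded in the sphere version of the critical-point formula --- is the only point that genuinely distinguishes this argument from the Euclidean theorem; everything else is the same bookkeeping of plugging the expansions of $C'(1)$ and $C''(1)$ into a known formula.
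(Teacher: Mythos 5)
Your proposal is correct and takes essentially the same route as the paper: the paper's proof simply applies Theorem 4.4 of \cite{CS18} (exactly the ``sphere analogue of Theorem 3.5'' you invoke) to $X$ and the standardized field $X/\sigma$, substitutes $C'(1)$ and $C''(1)$ from \eqref{eq:C'} rewritten via $\tilde{\kappa}$ and $\tilde{\eta}$ of \eqref{eq:eta}, and obtains $F_i(u)$ from \eqref{Eq:F-ratio}. Your closing discussion of the curvature contribution $\tilde{\eta}^2/2 = C'(1)/(2C''(1))$ to the $\GOI$ covariance parameter is an accurate account of what the cited theorem already encodes, so it is a helpful sanity check rather than a needed step.
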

\begin{proof} Applying Theorem 4.4 in \cite{CS18} to the standardized field $X/\sigma$, we obtain the first and second lines in \eqref{Eq:critial-sphere}. The last line in \eqref{Eq:critial-sphere} is a direct consequence of \eqref{Eq:F-ratio}.
\end{proof}

\section*{Acknowledgments}

The author thanks Yimin Xiao for introducing this research problem. He is also grateful to the editor and the anonymous referee for their efforts and valuable comments, which have led to great improvement for the paper.

	\begin{small}
		
	\end{small}
	
	\bigskip
	
	\begin{quote}
		\begin{small}
			
			\textsc{Dan Cheng}\\
			School of Mathematical and Statistical Sciences \\
			Arizona State University\\
			900 S. Palm Walk\\
			Tempe, AZ 85281, U.S.A.\\
			E-mail: \texttt{cheng.stats@gmail.com}

		\end{small}
	\end{quote}
	
\end{document}